\newtheorem{theorem}{Theorem}
\newtheorem{prop}[theorem]{Proposition}
\newcommand{\Sn}{\mathcal{S}_n }
\newcommand{\Sm}{\mathcal{S}_m }
\newcommand{\Pin}{P_i^n(m) }
\newcommand{\Pinj}{P_i^n(m)_j }
\title{A Short Proof of the Rank Formula for Inclusion Matrices using the Representation Theory of the Symmetric Group}
\author{Liam Jolliffe}
\date{}
\begin{document}

\maketitle
\begin{abstract}
We present a new proof of the well known formula for the rank of the inclusion matrix by constructing a $k\Sn$-module spanned by the columns of this matrix and calculating its dimension.
\end{abstract}

\section{Introduction}
The \textit{inclusion matrix,} $A_{i}^n(m)$, where $i\le n \le m$, is  the ${m \choose i} \times {m \choose n}$ matrix whose rows are indexed by subsets of $[m]:=\{1,2,\dots,m\}$ of size $i$ and whose columns are indexed by subsets of $[m]$ of size $n$. The entry corresponding to position $X,Y$ is $1$ if $X\subseteq Y$ and $0$ otherwise. This matrix arises in a number of combinatorial investigations. Gottlieb proved that over a field of characteristic $0$ this matrix has full rank \cite{Gottlieb}. Linial and Rothschild then determined a formula for the rank of this matrix over the field of two elements, as well the special case when $n=i+1$ over the field of three elements \cite{Linial}. Wilson solved the problem over any field by proving the following \cite{Wilson}:
\begin{theorem}\label{Rank}
Let $k$ be a field of characteristic $p$ and suppose $i \le \emph{min}\{n,m-n\}$. Then
$$rank_k(A_i^n(m))=\sum_{p \nmid {n-j\choose i-j}}{m \choose j}-{m \choose j-1},$$ 
where ${m \choose -1}$ is interpreted as 0.
\end{theorem}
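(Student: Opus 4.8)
The plan is to realise the matrix $A_i^n(m)$ as a homomorphism of $k\Sm$-modules and to compute the dimension of its image, which is exactly the span of its columns. Write $M^{(m-a,a)}$ for the permutation module $k\binom{[m]}{a}$, the $k\Sm$-module with $k$-basis the $a$-subsets of $[m]$. Acting on column vectors, each inclusion matrix $A_b^a(m)$ (with $b\le a$) becomes the $k\Sm$-homomorphism $M^{(m-a,a)}\to M^{(m-b,b)}$ sending an $a$-set to the formal sum of its $b$-subsets; equivariance is immediate since $\Sm$ permutes subsets compatibly with inclusion. In particular $\theta:=A_i^n(m)\colon M^{(m-n,n)}\to M^{(m-i,i)}$ has image $W=\operatorname{im}\theta$ equal to the column module, so $\operatorname{rank}_k(A_i^n(m))=\dim_k W$. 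Now $M^{(m-i,i)}$ has a Specht filtration with factors $S^{(m-j,j)}$ for $0\le j\le i$, each once, and $\dim_k S^{(m-j,j)}=\binom mj-\binom m{j-1}$; since $\binom{n-j}{i-j}=0$ whenever $j>i$, the target formula reads precisely $\dim_k W=\sum_{p\nmid\binom{n-j}{i-j}}\dim_k S^{(m-j,j)}$. It therefore suffices to show that $W$ possesses a Specht filtration whose factors are exactly the $S^{(m-j,j)}$ with $p\nmid\binom{n-j}{i-j}$.

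The binomial coefficients arise from a single identity. Double-counting the chains $X\subseteq Z\subseteq Y$ with $|X|=j$, $|Z|=i$, $|Y|=n$ yields
$$A_j^i(m)\,A_i^n(m)=\binom{n-j}{i-j}\,A_j^n(m)\qquad(0\le j\le i),$$
an equality of $k\Sm$-homomorphisms $M^{(m-n,n)}\to M^{(m-j,j)}$. Over a field of characteristic $0$ this gives the result at once: the modules $M^{(m-a,a)}$ are semisimple with pairwise non-isomorphic irreducible factors $S^{(m-j,j)}$, so by Schur's lemma $\theta$ is block-diagonal, acting as a scalar on each shared layer $S^{(m-j,j)}$ and annihilating those absent from the target; the displayed identity pins these scalars to nonzero multiples of $\binom{n-j}{i-j}$, recovering Gottlieb's full-rank theorem. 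The decisive point for general $p$ is that the intrinsic scalar by which $\theta$ acts on a layer carries \emph{extra} integer factors beyond $\binom{n-j}{i-j}$, so reducing that scalar modulo $p$ is \emph{not} the correct survival criterion; it is the down-maps $A_j^i(m)$, used as probes through the identity above, that strip away these spurious factors and isolate $\binom{n-j}{i-j}$ itself.

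Concretely, the identity forces a clean dichotomy in characteristic $p$: if $p\mid\binom{n-j}{i-j}$ then $A_j^i(m)$ annihilates $W$, so $W\subseteq\ker A_j^i(m)$; whereas if $p\nmid\binom{n-j}{i-j}$ then $A_j^i(m)$ carries $W$ onto $\operatorname{im}A_j^n(m)$, which one then wants to account for the full factor $S^{(m-j,j)}$. The first family of conditions confines $W$ inside $\bigcap_{j\notin D}\ker A_j^i(m)$, where $D=\{\,j: p\nmid\binom{n-j}{i-j}\,\}$, and should force every non-surviving factor to be absent; the second should force each surviving factor to appear in full.

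Reconciling the two into the exact value $\sum_{j\in D}\bigl(\binom mj-\binom m{j-1}\bigr)$ is the crux, and I expect it to be the main obstacle. In characteristic $p$ the permutation modules are not semisimple: the Specht filtration of $M^{(m-i,i)}$ does not split, the factors $S^{(m-j,j)}$ are themselves reducible, and distinct layers can interact in a non-split filtration, so the dichotomy above cannot simply be summed layer by layer. Making it rigorous requires the representation theory of $\Sm$ in an essential way---in particular a description of the submodules of the two-row permutation modules as intersections of kernels of the down-maps (James's kernel-intersection theorem and the attendant submodule lattice)---so as to identify $W$ with the predicted kernel intersection $\bigcap_{j\notin D}\ker A_j^i(m)$, to verify that $\operatorname{im}A_j^n(m)$ really contributes all of $S^{(m-j,j)}$ for $j\in D$, and to compute the resulting dimension exactly. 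Establishing this clean all-or-nothing behaviour for each layer in the non-semisimple setting is where the real work of the proof lies.
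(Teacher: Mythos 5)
You have set up exactly the paper's framework---realising the column span $W$ as the image of the $k\Sm$-homomorphism $\psi_i=A_i^n(m):M^{(m-n,n)}\to M^{(m-i,i)}$, filtering $M^{(m-i,i)}$ by Specht layers, and isolating the identity $A_j^i(m)A_i^n(m)=\binom{n-j}{i-j}A_j^n(m)$, which is precisely what the paper uses to kill the layers with $p\mid\binom{n-j}{i-j}$---but you stop exactly where the proof has to happen. By your own account, establishing the all-or-nothing behaviour of each layer ``is the crux'' and ``where the real work of the proof lies''; what you offer in its place is a plan (identify $W$ with $\bigcap_{j\notin D}\ker A_j^i(m)$ via James's kernel-intersection theorem) rather than an argument, and that plan aims at a statement both stronger than needed and not delivered by the cited tool: the kernel-intersection theorem identifies the \emph{consecutive} intersections $\bigcap_{k=0}^{j-1}\ker\psi_k$ with $S^{(m-i,j)(m-i,i)}$, and says nothing about $\dim\bigcap_{j\notin D}\ker\psi_j$ for an arbitrary set $D$. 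Moreover, your probe for the surviving layers controls the wrong object: when $p\nmid\binom{n-j}{i-j}$ you deduce $\psi_j(W)=\operatorname{im}A_j^n(m)$, but the dimension count needs the image under $\psi_j$ of the \emph{subquotient} $W_j:=W\cap\bigcap_{k<j}\ker\psi_k$, and $\operatorname{im}A_j^n(m)$ is in general larger than the single layer $S^{(m-j,j)}$, so no layer-by-layer conclusion follows from it.

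The missing idea---the actual content of the paper's proof---is the polytabloid computation. Filter $W$ by $W_j=W\cap S^{(m-i,j)(m-i,i)}$, so that $W_j/W_{j+1}\cong\psi_j(W_j)\subseteq S^{(m-j,j)}$. To see that this layer is all of $S^{(m-j,j)}$ when $p\nmid\binom{n-j}{i-j}$, take a $j$-polytabloid $e^j_t\in M^{(m-n,n)}$ and push it through $\psi_i$: the element $x=\psi_i(e^j_t)$ is a sum of $j$-polytabloids of $M^{(m-i,i)}$, hence lies in $W_j$ (not merely in $W$), and \Cref{psi} gives $\psi_j(x)=\binom{n-j}{i-j}e^j_{t'}$. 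When $p\nmid\binom{n-j}{i-j}$ this exhibits a polytabloid inside $\psi_j(W_j)$; since the polytabloids span $S^{(m-j,j)}$ and are permuted transitively by $\Sm$, the layer is full (\Cref{whole module}). Combined with the vanishing for $p\mid\binom{n-j}{i-j}$, which you do have, the rank is the sum of $\dim S^{(m-j,j)}=\binom{m}{j}-\binom{m}{j-1}$ over those $j$ with $p\nmid\binom{n-j}{i-j}$, by the hook length formula. Without this construction of explicit elements inside the subquotients---or some substitute for it---your outline does not constitute a proof.
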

Wilson also gives a characterisation of those vectors which are in the $\mathbb{Z}$-span of the columns of $A_i^n(m)$. Another proof of \Cref{Rank} is given by Frankl \cite{Frankl}. Observe that there is nothing lost by the assumption that $i \le \text{min}\{n,m-n\}$, because $A_i^n(m)^T=A_{m-n}^{m-i}(m)$, and so this assumption shall be made throughout. 

We shall give a new proof of \Cref{Rank} by constructing a $k\Sn$-module spanned by the columns of $A_i^{n}(m)$ which, of course, has dimension $\text{rank}_k(A_i^n(m))$. This proof shall make use of the representation theory of the symmetric group, which we review in the next section. The reader is referred to James' book \cite{James}, from which our notation is taken, for more details.

\section{Representation Theory of $\Sn$}

Recall, a \textit{partition} of an integer, $n>0$, is a non-increasing sequence of positive integers $\lambda=(\lambda_1,\lambda_2,\dots,\lambda_r)$ with $\sum_{i\ge 1}\lambda_i=n$. If $\lambda$ is a partition of $n$ we write $\lambda\vdash n$ and identify $\lambda$ with its corresponding \textit{Young diagram}: a left justified array of boxes with $\lambda_i$ boxes in the $i$th row. Partitions of $n$ index a number of important classes of $k\Sn$-modules; we shall now describe two such classes of modules: the permutation modules $M^\lambda$ and the Specht modules $S^\lambda$. To do so we will need some more combinatorial definitions.

Given a partition $\lambda$, we define a $\lambda$-\textit{tableau} to be a bijection between the set $[n]$ and the boxes of (the Young diagram of) $\lambda$. We can define an equivalence relation $\sim_r$ on the set of $\lambda$-tableaux by calling $t$ and $s$ \textit{row-equivalent} if the set of elements which appear in each row of $t$ and $s$ are the same. For example; the following two $(4,3,3,1)$-tableaux are row equivalent.

$$
\ytableausetup
{centertableaux,boxsize=1.2em}
\begin{ytableau}
1&2&3&4\\
5&6&7\\
8&9&10\\
11
\end{ytableau}
\sim_r
\begin{ytableau}
2&4&3&1\\
6&5&7\\
9&10&8\\
11
\end{ytableau}
$$
We call an equivalence class of $\lambda$-tableaux a $\lambda$-\textit{tabloid}, and denote the tabloid corresponding to the tableaux $t$ by $\{t\}$, or by drawing the Young diagram without the vertical lines, 
$$
\ytableausetup
{centertableaux,tabloids,boxsize=1.2em}
\begin{ytableau}
1&2&3&4\\
5&6&7\\
8&9&10\\
11
\end{ytableau}
.$$
There is an obvious action of $\Sn$ on both the sets of $\lambda$-tableaux and $\lambda$-tabloids, obtained by permuting the positions in which elements appear. Thus, the vector space consisting of formal sums of $\lambda$-tabloids is a $k\Sn$-module, which we denote $M^\lambda$. Throughout this paper we shall only be interested in such a module when $\lambda$ is a t\textit{wo-part partition}, that is $\lambda=(\lambda_1,\lambda_2)$, in which case we can identify the tabloid $\{t\}$ with the set of elements appearing in the second row of $\{t\}$. We then see that $M^{(m-i,i)}$ has a basis consisting of all the $i$ subsets $X\subseteq_i [m]$, that is subsets of size $i$. We shall frequently alternate between these two notations depending on notational convenience.

An important submodule of this permutation module $M^\lambda$ is the Specht module $S^\lambda$, which is spanned by the polytabloids in $M^\lambda$. We shall take a slightly unusual step here, and define the Specht module as a special case in a larger family of submodules. We will specialise here to two part partitions, but the general definitions can be found in James' book \cite{James}.

Let $t$ be a $(m-i,i)$-tableau and let $j\le i$. The $j$-\textit{column stabiliser} of $t$, denoted $C_j(t)$ is the set of permutations that fix all but the first $j$ rows of $t$ and only permute elements that appear in the same column of $t$. In particular $C_j(t)$ is generated by the $j$ transpositions which swap an element of the first $j$ entries in the first row of $t$ with the element appearing below it

The $j$-\textit{column symmetriser} is the element of the group algebra $$\kappa_j(t):=\sum_{\sigma\in C_j(t)}(-1)^\sigma\sigma,$$ and the $j$-\textit{polytabloid} is 
$$
e^j_t=\kappa_j(t)\{t\}\in M^{(m-i,i)}.
$$
We define the $k\Sn$-module $S^{(m-i,j)(m-i,i)}\subseteq M^{(m-i,i)}$ to be the submodule spanned by the $j$-polytabloids. The Specht-module $S^{(m-i,i)}$ is just the module obtained when $j=i$. 
This gives us a chain of submodules 
$$M^{(m-i,i)}\supseteq S^{(m-i,1)(m-i,i)}\supseteq \cdots S^{(m-i,i-1)(m-i,i)}\supseteq S^{(m-i,i)}\supseteq 0.$$
It can be shown that the successive quotients 
$S^{(m-i,j)(m-i,i)}/S^{(m-i,j+1)(m-i,i)}$ are isomorphic as $k\Sn$-modules to the Specht module $S^{(m-j,j)}$, and so we write 
$$M^{(m-i,i)}\sim\begin{array}{c}
    S^{(m)}   \\
    S^{(m-1,1)} \\
    \vdots \\
    S^{(m-i,i)}
\end{array}$$
 to indicate that $M^{(m-i,i)}$ has a chain of submodules whose successive quotients are $S^{(m)},S^{(m-1,1)},\dots,S^{(m-i+1,i-1)}$ and $S^{(m-i,i)}$. 
To see that these quotients are indeed the Specht modules we follow \cite{James 77} and define a $k\Sn$-homomorphism 
$$\psi_j:M^{(m-i,i)}\to M^{(m-j,j)}$$
by 
$$\psi_j(X)=\sum_{Z\subseteq_j X} Z,$$
for $X\subseteq_i [m]$. In the notation of tabloids 
$$
\psi_j(\{t\})=\sum_{\{s\}} \{s\},
$$
where the sum is over all the $(m-j,j)$-tabloids $\{s\}$ whose second row is a subset of the second row of $\{t\}$.
\begin{prop}\label{psi}
Let $t$ be an $(m-i,i)$-tableau and $e^j_t$ its $j$-polytabloid. Let $k<j\le i$ then $$\psi_k(e^j_t)=0,$$
while 
$$\psi_j(e^j_t)=e^j_{t'},$$
where $t'$ is the $(m-j,j)$-tableau obtained from $t$ by moving the last $i-j$ entries in the bottom row to the end of the top row. 
\end{prop}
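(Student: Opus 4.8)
The plan is to exploit the fact that both $\psi_k$ and $\psi_j$ are $k\Sn$-homomorphisms, which lets me slide them past the group-algebra element $\kappa_j(t)$ and reduce everything to understanding how $\kappa_j(t)$ acts on a single basis set. Write the first row of $t$ as $a_1,\dots,a_{m-i}$ and the second row as $b_1,\dots,b_i$, and identify the tabloid $\{t\}$ with the set $Y=\{b_1,\dots,b_i\}$. First I would record that $C_j(t)$ is generated by the disjoint transpositions $(a_1\,b_1),\dots,(a_j\,b_j)$, so that $\kappa_j(t)=\prod_{l=1}^{j}\bigl(1-(a_l\,b_l)\bigr)$ is a product of pairwise commuting factors. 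Since $\psi_k$ is a homomorphism, $\psi_k(e_t^j)=\psi_k(\kappa_j(t)\{t\})=\kappa_j(t)\psi_k(\{t\})=\sum_{Z\subseteq_k Y}\kappa_j(t)Z$, and likewise for $\psi_j$.

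The crux is then a single annihilation observation: for any set $Z$ with $a_l\notin Z$, the factor $\bigl(1-(a_l\,b_l)\bigr)$ kills $Z$ exactly when $b_l\notin Z$, since the transposition $(a_l\,b_l)$ then fixes $Z$ setwise. Because the factors of $\kappa_j(t)$ commute, I can move any offending factor to the right and conclude that $\kappa_j(t)Z=0$ whenever $Z$ fails to contain all of $b_1,\dots,b_j$. Every $Z$ appearing in the sums above is a subset of $Y$ and hence automatically avoids all the $a_l$, so this criterion applies to each term.

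For the first claim, when $k<j$ no $k$-element subset $Z\subseteq Y$ can contain all $j$ of the elements $b_1,\dots,b_j$, so every term $\kappa_j(t)Z$ vanishes and $\psi_k(e_t^j)=0$. For the second claim, when the map is $\psi_j$ the only $j$-subset of $Y$ containing $\{b_1,\dots,b_j\}$ is $Z_0=\{b_1,\dots,b_j\}$ itself, so $\psi_j(e_t^j)=\kappa_j(t)Z_0$. It remains to identify this with $e_{t'}^j$: the tableau $t'$ has second row $b_1,\dots,b_j$ and first row $a_1,\dots,a_{m-i},b_{j+1},\dots,b_i$, so its first $j$ columns coincide with those of $t$, giving $\kappa_j(t')=\kappa_j(t)$, while the tabloid $\{t'\}$ is identified with $Z_0$; thus $\kappa_j(t)Z_0=\kappa_j(t')\{t'\}=e_{t'}^j$.

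I do not expect a serious obstacle: once the homomorphism property reduces the problem to evaluating $\kappa_j(t)Z$ on subsets $Z$, the whole argument rests on the elementary annihilation criterion above. The only points requiring genuine care are justifying the factorisation of $\kappa_j(t)$ into commuting transposition factors (valid precisely because distinct columns are disjoint) and checking that the single surviving term really is the $j$-polytabloid of $t'$, rather than a reindexed variant of it.
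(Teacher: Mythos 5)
Your proof is correct and takes essentially the same approach as the paper: both use the homomorphism property to pull $\psi_k$ (resp.\ $\psi_j$) past $\kappa_j(t)$, then annihilate every term whose subset omits one of $b_1,\dots,b_j$ via a transposition in $C_j(t)$ that fixes it, leaving only the single term identified with $e^j_{t'}$. Your factorisation $\kappa_j(t)=\prod_{l=1}^{j}\bigl(1-(a_l\,b_l)\bigr)$ is just a repackaging of the paper's pairwise sign-cancellation argument, and your explicit check that the first $j$ columns of $t$ and $t'$ coincide (so $\kappa_j(t')=\kappa_j(t)$) nicely makes precise a step the paper passes over silently.
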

\begin{proof}
\begin{align*}
\psi_k(e^j_t)&=\psi_k(\kappa_j(t)\{t\})\\
             &=\psi_k(\sum_{\sigma\in C_j(t)}(-1)^\sigma\sigma\{t\}) \\
             &=\sum_{\sigma\in C_j(t)}(-1)^\sigma\sigma\psi_k(\{t\}) \\
             &=\sum_{\sigma\in C_j(t)}(-1)^\sigma\sigma \sum_{\{s\}}\{s\},
\end{align*}
where the second sum is over all $(m-k,k)$-tabloids, $\{s\}$, whose second row is a subset of the second row of $\{t\} $. As $k<j$ then some element in the first $j$ entries of the second row of $t$ must lie in the top row of $\{s\}$, and there must be a transposition $\sigma\in C_j(t)$ fixing $\{s\}$. The terms involving this transposition and the terms not involving this transposition have opposite signs, and cancel, thus $$\psi_k(e^j_t)=0.$$
Similarly 
$$\psi_j(e^j_t)=\sum_{\sigma\in C_j(t)}(-1)^\sigma\sigma \sum_{\{s\}}\{s\},$$
with cancellation for any $\{s\}$ for which one of the first $j$ elements of the second row of $t$ appears in the first row. Therefore
\begin{align*}
\psi_j(e^j_t)&=\sum_{\sigma\in C_j(t)}(-1)^\sigma\sigma \{t'\}\\
&=\sum_{\sigma\in C_j(t')}(-1)^\sigma\sigma \{t'\}\\
&=e^j_{t'}.
\end{align*}
\end{proof}

So, when restricted to $S^{(m-i,j)(m-i,i)}\subseteq M^{(m-i,i)}$ the image of $\psi_j$ is isomorphic to $S^{(m-j,j)}$ and its kernel is $S^{(m-i,j)(m-i,i)}$. This gives an alternative characterisation of 
$S^{(m-i,j)(m-i,i)}$ as:
$$
S^{(m-i,j)(m-i,i)}=\cap_{k=0}^{j-1}(\text{ker}(\psi_k:M^{(m-i,i)}\to M^{(m-k,k)})).
$$

To prove \Cref{Rank} in the next section we will identify a submodule of $M^{(m-i,i)}$ and study its images under this map. We shall conclude this section by stating a special case of the famous hook length formula \cite{hook}, which gives the dimension of a Specht module.
\begin{theorem}\label{dim}
Let $k$ be a field. 
$$\text{dim}S^{(m-j,j)}={m \choose j}-{m \choose j-1},$$
where ${m \choose -1}=0.$
\end{theorem}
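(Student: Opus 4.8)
The plan is to prove the formula by induction on $j$, using the filtration of the permutation module established above rather than invoking the general hook length formula directly. The key observation is that specialising the chain exhibited earlier to the case $i=j$ gives
$$M^{(m-j,j)}\sim\begin{array}{c}
S^{(m)}\\
S^{(m-1,1)}\\
\vdots\\
S^{(m-j,j)}
\end{array}$$
with successive quotients the Specht modules $S^{(m-k,k)}$ for $0\le k\le j$. Since dimension is additive along a filtration, this lets me compute $\dim M^{(m-j,j)}$ as the sum of the dimensions of these factors. Because $M^{(m-j,j)}$ has a basis indexed by the $j$-subsets of $[m]$, its dimension is ${m \choose j}$, and so I obtain the relation
$${m \choose j}=\sum_{k=0}^{j}\dim S^{(m-k,k)}.$$

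For the base case $j=0$, the module $S^{(m)}=M^{(m)}$ is the one-dimensional trivial module, and indeed ${m \choose 0}-{m \choose -1}=1$. Assuming inductively that $\dim S^{(m-k,k)}={m \choose k}-{m \choose k-1}$ for every $k<j$, I substitute into the relation above and solve for the top factor:
$$\dim S^{(m-j,j)}={m \choose j}-\sum_{k=0}^{j-1}\left({m \choose k}-{m \choose k-1}\right).$$
The remaining sum telescopes: all interior terms cancel and, using the convention ${m \choose -1}=0$, it collapses to ${m \choose j-1}$. This yields $\dim S^{(m-j,j)}={m \choose j}-{m \choose j-1}$, completing the induction.

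I do not expect a genuine obstacle here: once the filtration is in hand, the argument is essentially bookkeeping. The only points requiring care are the edge cases, namely verifying the base case and handling the convention ${m \choose -1}=0$ in the telescoping step, together with confirming that the successive quotients of the chain really are the Specht modules $S^{(m-k,k)}$, which is exactly what \Cref{psi} and the surrounding discussion secure. As a sanity check one can compare against the direct hook length computation for the two-row shape $(m-j,j)$: the product of hook lengths is $j!\,(m-j+1)!/(m-2j+1)$, so the hook length formula gives $\dim S^{(m-j,j)}=m!\,(m-2j+1)/\big(j!\,(m-j+1)!\big)$, which one checks equals ${m \choose j}-{m \choose j-1}$. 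This confirms the inductive answer and shows the filtration route reaches the same place without needing the general formula.
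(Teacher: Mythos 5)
Your proof is correct, but it takes a genuinely different route from the paper: the paper does not prove this statement at all, instead quoting it as a special case of the hook length formula of Frame, Robinson and Thrall \cite{hook}. You derive it from the filtration $M^{(m-j,j)}\supseteq S^{(m-j,1)(m-j,j)}\supseteq\cdots\supseteq S^{(m-j,j)}\supseteq 0$ (the case $i=j$ of the chain in Section 2), using additivity of dimension to get $\binom{m}{j}=\sum_{k=0}^{j}\dim S^{(m-k,k)}$ and then a telescoping induction; the computation itself is flawless, including the base case, the handling of $\binom{m}{-1}=0$, and the hook-length sanity check. What your route buys is self-containedness: the dimension formula falls out of the same machinery ($\psi_j$ and the $j$-polytabloids) that the paper develops anyway, and it shows directly that the dimension is independent of the field, with no appeal to an external theorem. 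The one caveat is your claim that \Cref{psi} ``secures'' that the successive quotients are exactly the Specht modules. It only secures half of this: \Cref{psi} shows $S^{(m-j,k+1)(m-j,j)}\subseteq\ker\psi_k$, so each quotient \emph{surjects} onto $S^{(m-k,k)}$, which in your counting argument would give only the inequality $\binom{m}{j}\ge\sum_{k=0}^{j}\dim S^{(m-k,k)}$ rather than the equality you need. The reverse containment --- that the kernel of $\psi_k$ restricted to $S^{(m-j,k)(m-j,j)}$ is no larger than $S^{(m-j,k+1)(m-j,j)}$ --- is the substantive kernel-intersection result that the paper asserts with a citation to James \cite{James 77} but does not prove. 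Granting that assertion, exactly as the paper does, your induction is complete; so your proof is valid relative to the same unproved input the paper already relies on, and it trades dependence on the hook length formula for dependence on James's kernel theorem.
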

The astute reader will have noticed that is the term which appears in the formula in \Cref{Rank}. 

\section{Proof of Theorem 1}

The rows of the inclusion matrix $A_{i}^n(m)$ are indexed by the $i$ subsets of $[m]$, which is naturally the basis for the $k\Sm$-module $M^{(m-i,i)}$. The columns of span a submodule of $M^{(m-i,i)}$ of dimension $\text{rank}_k(M^{(m-i,i)})$. We shall denote this submodule by $\Pin$. Our analysis of $M^{(m-i,i)}$ in the previous section gives rise to a chain of submodules 
$$\Pin\supseteq P_i^n(m)_1\supseteq P_i^n(m)_2\supseteq\cdots\supset P_i^n(m)_i\supseteq 0,$$
where $\Pinj:=\Pin\cap S^{(m-i,j)(m-i,i)}$.
This shows that 
$$\Pin\sim
\begin{array}{c}
    L^{(m)}   \\
    L^{(m-1,1)} \\
    \vdots \\
    L^{(m-i,i)}
\end{array},
$$
where each $L^{(m-j,j)}$ is some submodule of $S^{(m-j,j)}$. In particular, $L^{(m-j,j)}$ is the image of $\Pinj$ under the map $\psi_j:M^{(m-i,i)}\to M^{(m-j,j)}$.

Observe that the columns of the matrix $A_i^n(m)$ correspond to the images of the $n$-subsets of $[m]$ under the homomorphism $\psi_i:M^{(m-n,n)}\to M^{(m-i,i)}$. The module $\Pin$ is thus the image of $\psi_i:M^{(m-n,n)}\to M^{(m-i,i)}$. Observe that although $(m-n,n)$ may not be a partition, we can still define the permutation module $M^{(m-n,n)}$, and we can also define $j$-polytabloids for any $j<\min\{m-n,n\}$. Denote by $x\in \Pin$ the image of the $j$-polytabloid corresponding to a $(m-n,n)$-tableau $t$, for $j\le i$. Our assumption from the introduction that $i<\min\{m-n,n\}$ ensures that this $j$-polytabloid is well-defined.
\begin{align*}
x:&=\psi_i(e^j_t)\\
&=\sum_{\sigma\in C_j(t)}(-1)^\sigma\sigma \sum_{\{s\}}\{s\}
\end{align*}
where the second sum is over all $(m-i,i)$-tabloids, $\{s\}$, whose second row is a subset of the second row of $\{t\}$. Of course we have cancellation of any terms for which the first $j$ entries from the second row of $t$ do not appear in the second row of $\{s\}$, so the sum is over all $(m-i,i)$-tabloids whose second row is a subset of the second row of $\{t\}$ of size $i$ containing these first $j$ entries.
Observe then that 
$$x=\sum_{s}e^j_s,$$
where the sum is over all $(m-i,i)$-tableaux obtained by moving $n-i$ of the last $n-j$ entries of the second row of $t$ to the top row.
As $x$ is a sum of $j$-polytabloids, $x\in S^{(n-i,j)(m-i,i)}$ and thus $x \in \Pinj$, and so its image under $\psi_j$ is in $L^{(m-j,j)}$.
\begin{prop}\label{whole module}
If $p\nmid {n-j \choose i-j}$ then $L^{(m-j,j)}=S^{(m-j,j)}$.
\end{prop}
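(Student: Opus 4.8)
The plan is to prove the two inclusions separately. That $L^{(m-j,j)} \subseteq S^{(m-j,j)}$ is already recorded above, so the work is to establish $S^{(m-j,j)} \subseteq L^{(m-j,j)}$. Since $S^{(m-j,j)}$ is spanned by the $j$-polytabloids $e^j_{t'}$ as $t'$ ranges over all $(m-j,j)$-tableaux, it suffices to produce each such $e^j_{t'}$ inside $L^{(m-j,j)}$, where the hypothesis $p\nmid\binom{n-j}{i-j}$ will clear an otherwise obstructive scalar.

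First I would continue the computation of $\psi_j(x)$ for the element $x=\psi_i(e^j_t)\in\Pinj$ constructed just before the statement, whose image under $\psi_j$ lies in $L^{(m-j,j)}$ by definition. Writing $x=\sum_s e^j_s$ as in the preceding display and applying \Cref{psi} term by term gives $\psi_j(x)=\sum_s e^j_{s'}$, where $s'$ is obtained from $s$ by returning the last $i-j$ bottom-row entries to the top row. The key observation is that all of these $s'$ yield one and the same $j$-polytabloid: the $j$-polytabloid $e^j_{s'}$ depends only on the bottom row of $s'$ (as a set, in order) and on its first $j$ columns, and by construction every $s'$ has bottom row equal to the first $j$ entries of the bottom row of $t$ and the same first $j$ top-row entries as $t$. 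The only thing that varied between the different $s$, namely which $n-i$ of the last $n-j$ bottom entries of $t$ were moved up, is precisely the data erased by passing to $s'$. Counting summands, there are $\binom{n-j}{i-j}$ choices of $s$, so $\psi_j(x)=\binom{n-j}{i-j}\,e^j_{t'}$, where $t'$ is the $(m-j,j)$-tableau whose bottom row is the first $j$ entries of the bottom row of $t$. Equivalently, one can record the clean identity $\psi_j\circ\psi_i=\binom{n-j}{i-j}\,\psi_j$ on $M^{(m-n,n)}$, proved by the double count that an $i$-subset $X$ with $W\subseteq X\subseteq Y$ for a fixed $j$-subset $W$ is determined by $\binom{n-j}{i-j}$ choices, and then invoke \Cref{psi}.

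Since $p\nmid\binom{n-j}{i-j}$, this scalar is a unit in $k$, so $e^j_{t'}\in L^{(m-j,j)}$. It then remains to check that every $(m-j,j)$-tableau $t'$ arises this way: given $t'$, I would build a $(m-n,n)$-tableau $t$ by moving the last $n-j$ entries of the top row of $t'$ into the bottom row. This is possible because the standing assumption $i<\min\{m-n,n\}$ forces $m-n>j$, so the top row of $t'$ has more than $j$ entries and its first $j$ entries survive untouched. One checks that this $t$ maps back to $e^j_{t'}$ under the construction above. Hence $L^{(m-j,j)}$ contains every generator $e^j_{t'}$ of $S^{(m-j,j)}$, giving $S^{(m-j,j)}\subseteq L^{(m-j,j)}$ and therefore equality.

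I expect the main obstacle to be the bookkeeping in the middle step: verifying precisely that the distinct tableaux $s$ all collapse to a single polytabloid $e^j_{t'}$, that is, that the transformation $s\mapsto s'$ forgets exactly the choices that distinguished them. Phrasing this through the composition identity $\psi_j\circ\psi_i=\binom{n-j}{i-j}\,\psi_j$ is the cleanest way to sidestep the tableau combinatorics, reducing the whole argument to a single binomial-coefficient count together with \Cref{psi}.
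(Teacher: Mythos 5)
Your proposal is correct and follows essentially the same route as the paper: both compute $\psi_j(x)$ for $x=\psi_i(e^j_t)\in \Pinj$, observe that the $\binom{n-j}{i-j}$ summands $e^j_{s'}$ all coincide, and use the hypothesis to divide by this unit scalar and conclude that every $j$-polytabloid lies in $L^{(m-j,j)}$. Your two refinements --- spelling out why the summands are equal and why every $(m-j,j)$-tableau arises from some $t$, and packaging the scalar via the identity $\psi_j\circ\psi_i=\binom{n-j}{i-j}\psi_j$ --- merely make explicit what the paper leaves terse (indeed, that composition identity is exactly the computation the paper performs in the proof of Theorem 1 for the case $p\mid\binom{n-j}{i-j}$).
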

\begin{proof}
Let $x=\sum_{s}e^j_s$ as above. Then, by \Cref{psi},
\begin{align*}
\psi_j(x)&= \psi_j(\sum_{s}e^j_s)\\
&= \sum_{s} e^j_{s'}
\end{align*}

where $s'$ is the $(m-j,j)$-tableau obtained from $s$ by moving the last $i-j$ entries of the bottom row to the top row. Each term is equal, and and the sum is over all $(m-i,i)$-tableaux obtained by moving $n-i$ of the last $n-j$ entries of the second row of $t$ to the top row, of which there are ${n-j \choose i-j}$. Thus 
$$\psi_j(x)={n-j \choose i-j} e^j_{s'},$$
and hence $e^j_{s'}\in L^{(m-j,j)}$. In fact, this shows that any $j$-polytabloid is in $L^{(m-j,j)}$ and thus $L^{(m-j,j)}=S^{(m-j,j)}$.
\end{proof}

\begin{proof}[Proof of Theorem 1]
Consider the image of $\Pinj$ under the map $\psi_j$, which is a submodule $L^{(m-j,j)}\subseteq S^{(m-j,j)}$. By \Cref{whole module}, if  $p\nmid {n-j \choose i-j}$ then $L^{(m-j,j)}= S^{(m-j,j)}$. On the other hand, if $p\mid {n-j \choose i-j}$ then for any column $y=\psi_i(Y)$ of $A_i^n(m)$, 
\begin{align*}
\psi_j(y)&=\sum_{X\subset_i Y}\psi_j(X)\\
&=\sum_{X\subseteq_i Y}\sum_{Z\subseteq_j X} Z\\
&=\sum_{Z\subseteq_j Y}{n-j\choose i-j} Z \\
&=0.
\end{align*}
This means that $\psi_j$ is the zero map on $\Pin$, thus $L^{(m-j,j)}=0$.
We conclude that 
$$\Pin\sim
\begin{array}{c}
    L^{(m)}   \\
    L^{(m-1,1)} \\
    \vdots \\
    L^{(m-i,i)}
\end{array},
$$
with $$L^{(m-j,j)} = \begin{cases}
0  & \text{if }  p\mid {n-j \choose i-j}\\
S^{(m-j,j)} &\text{if }  p\nmid {n-j \choose i-j}
\end{cases}.
$$
The dimension of this module is then: 
\begin{align*}
\text{dim}_k(\Pin)&=\sum_{j=0}^m \text{dim}_k(L^{(m-j,j)}))\\
&=\sum_{p\nmid {n-j \choose i-j}} \text{dim}_k(S^{(m-j,j)}))\\
&=\sum_{p\nmid {n-j \choose i-j}} {m \choose j}-{m \choose j-1},
\end{align*}
where the last equality is due to \Cref{dim}. The rank of the inclusion matrix $A_i^n(m)$ over $k$ is the dimension of the $k\Sn$-module $\Pin$, thus proving the result.
\end{proof}
\section*{Aknowledgements}
This work will appear in the author's PhD thesis prepared at the University of Cambridge and supported by the Woolf Fisher Trust and the Cambridge Trust. This work was done while the author was a visiting scholar at Victoria University of Wellington. The author would like to thank Dr Stuart Martin for his encouragement and support and Freddie Illingworth for conversations on the history of this problem.

\newpage
\thispagestyle{footer}

\end{document}